\newtheorem{theorem}{Theorem}
\newtheorem{lemma}[theorem]{Lemma}
\newtheorem{proposition}[theorem]{Proposition}
\newtheorem{corollary}[theorem]{Corollary}
\theoremstyle{definition}
\newtheorem{example}[theorem]{Example}
\begin{document}

\title[Inner and Outer Automorphisms]
{Inner and Outer Automorphisms\\
of Free Metabelian Nilpotent Lie algebras}

\author[Vesselin Drensky and \c{S}ehmus F\i nd\i k]
{Vesselin Drensky and \c{S}ehmus F\i nd\i k}
\address{Institute of Mathematics and Informatics,
Bulgarian Academy of Sciences,
1113 Sofia, Bulgaria}
\email{drensky@math.bas.bg}
\address{Department of Mathematics,
\c{C}ukurova University, 01330 Balcal\i,
 Adana, Turkey}
\email{sfindik@cu.edu.tr}

\thanks
{The research of the second named author was partially supported by the
 Council of Higher Education (Y\"OK) in Turkey}

\subjclass[2010]
{17B01, 17B30, 17B40.}
\keywords{free metabelian Lie algebras, inner automorphisms, outer automorphisms, Baker-Campbell-Hausdorff formula.}

\begin{abstract} Let $L_{m,c}$ be the free metabelian nilpotent of class $c$ Lie algebra of rank $m$
over a field $K$ of characteristic 0. We describe the groups of inner and outer automorphisms of $L_{m,c}$.
To obtain this result we first describe the groups of inner and continuous outer automorphisms of the completion
$\widehat{F_m}$ with respect to the formal power series topology of the free metabelian Lie algebra $F_m$ of rank $m$.
\end{abstract}

\maketitle

\section*{Introduction}

Let $L_m$ be the free Lie algebra of rank $m\geq 2$ over a field $K$ of characteristic 0 with free generators
$x_1,\ldots,x_m$ and let $L_{m,c}=L_m/(L_m''+L_m^{c+1})$ be the free metabelian nilpotent of class $c$
Lie algebra. This is the relatively free algebra of rank $m$ in the variety of Lie
algebras $\mathfrak A^2\cap \mathfrak N_c$, where $\mathfrak A^2$ is the
metabelian (solvable of class 2) variety of Lie algebras and ${\mathfrak N}_c$ is the variety of
all nilpotent Lie algebras of class at most $c$.

The initial goal of our paper was to describe the groups of inner automorphisms $\text{\rm Inn}(L_{m,c})$
and outer automorphisms $\text{\rm Out}(L_{m,c})$  of
the Lie algebra $L_{m,c}$. The automorphism group $\text{\rm Aut}(L_{m,c})$ is a semidirect product of the normal subgroup
$\text{\rm IA}(L_{m,c})$ of the automorphisms which induce the identity map modulo the commutator ideal
of $L_{m,c}$ and the general linear group $\text{\rm GL}_m(K)$.
Since $\text{\rm Inn}(L_{m,c})\subset \text{\rm IA}(L_{m,c})$,
for the description of the factor group
$\text{\rm Out}(L_{m,c})=\text{\rm Aut}(L_{m,c})/\text{\rm Inn}(L_{m,c})$ it is sufficient to know only
$\text{\rm IA}(L_{m,c})/\text{\rm Inn}(L_{m,c})$.

The composition of two inner automorphisms is an inner automorphism obtained by
the Baker-Campbell-Hausdorff formula which gives the solution $z$ of the equation $e^z=e^x\cdot e^y$
for non-commuting $x$ and $y$.
If $x,y$ are the generators of the free associative algebra $A=K\langle x,y\rangle$, then
$z$ is a formal power series in the completion $\widehat{A}$ with respect to the formal power series topology.
The homogeneous components of $z$ are Lie elements.
Hence, it is natural to work in the completion $\widehat{F_m}$ with respect to the formal power series topology
of the free metabelian Lie algebra $F_m=L_m/L''_m$ and to study the groups of its inner automorphisms and
of its continuous outer automorphisms. Then the results for $\text{\rm Inn}(L_{m,c})$ and $\text{\rm Out}(L_{m,c})$
follow easily by factorization modulo $(\widehat{F_m})^{c+1}$.
Gerritzen \cite{G} found a simple version of the Baker-Campbell-Hausdorff
formula when applied to the algebra $\widehat{F_2}$. This provides a nice expression of
the composition of inner automorphisms
in $\widehat{F_m}$.

A result of Shmel'kin \cite{Sh} states that the free metabelian Lie algebra $F_m$ can be embedded into the abelian
wreath product $A_m\text{\rm wr}B_m$, where $A_m$ and $B_m$ are $m$-dimensional abelian Lie algebras
with bases  $\{a_1,\ldots,a_m\}$ and $\{b_1,\ldots,b_m\}$, respectively.
The elements of $A_m\text{\rm wr}B_m$ are of the form $\sum_{i=1}^ma_if_i(t_1,\ldots,t_m)+\sum_{i=1}^m\beta_ib_i$,
where $f_i$ are polynomials in $K[t_1,\ldots,t_m]$ and $\beta_i\in K$. This allows to introduce partial derivatives
in $F_m$ with values in $K[t_1,\ldots,t_m]$ and the Jacobian matrix $J(\phi)$ of an endomorphism $\phi$ of $F_m$.
Restricted on the semigroup $\text{\rm IE}(F_m)$ of endomorphisms of $F_m$ which are identical modulo
the commutator ideal $F_m'$, the map $J:\phi\to J(\phi)$ is a semigroup monomorphism of $\text{\rm IE}(F_m)$
into the multiplicative semigroup of the algebra $M_m(K[t_1,\ldots,t_m])$ of $m\times m$ matrices
with entries from $K[t_1,\ldots,t_m]$.
We give the explicit form of the Jacobian matrices of inner automorphisms of $\widehat{F_m}$ and of
the coset representatives of the continuous outer automorphisms in $\text{\rm IOut}(\widehat{F_m})$.

Finally we transfer the obtained results to the algebra $L_{m,c}$ and obtain the description of $\text{\rm Inn}(L_{m,c})$
and $\text{\rm IOut}(L_{m,c})$.

\section{Preliminaries}

Let $L_m$ be the free Lie algebra of rank $m\geq 2$ over a field $K$ of characteristic 0
with free generators $x_1,\ldots,x_m$ and let $L_{m,c}=L_m/(L_m''+L_m^{c+1})$ be the free
metabelian nilpotent of class $c$ Lie algebra. It is freely generated by
$y_1,\ldots,y_m$, where $y_i=x_i+(L_m''+L_m^{c+1})$, $i=1,\ldots,m$.
We use the commutator notation for the Lie multiplication. Our commutators are left normed:
\[
[u_1,\ldots,u_{n-1},u_n]=[[u_1,\ldots,u_{n-1}],u_n],\quad n=3,4,\ldots.
\]
In particular,
\[
L_m^k=\underbrace{[L_m,\ldots,L_m]}_{k\text{ times}}.
\]
For each $v\in L_{m,c}$, the linear operator $\text{\rm ad}v:L_{m,c}\to L_{m,c}$ defined by
\[
u(\text{\rm ad}v)=[u,v],\quad u\in L_{m,c},
\]
is a derivation of $L_{m,c}$ which is nilpotent and $\text{\rm ad}^cv=0$
because $L_{m,c}^{c+1}=0$.
Hence the linear operator
\[
\exp(\text{\rm ad}v)=1+\frac{\text{\rm ad}v}{1!}+\frac{\text{\rm ad}^2v}{2!}+\cdots
=1+\frac{\text{\rm ad}v}{1!}+\frac{\text{\rm ad}^2v}{2!}+\cdots+\frac{\text{\rm ad}^{c-1}v}{(c-1)!}
\]
is well defined and is an automorphism  of $L_{m,c}$.
The set of all such automorphisms forms a normal subgroup of the group of all
automorphisms $\text{\rm Aut}(L_{m,c})$ of $L_{m,c}$.
This group is called the inner automorphism group of $L_{m,c}$ and is denoted by $\text{\rm Inn}(L_{m,c})$.
The factor group $\text{\rm Aut}(L_{m,c})/\text{\rm Inn}(L_{m,c})$ is called the outer automorphism group of
$L_{m,c}$ and is denoted by $\text{\rm Out}(L_{m,c})$. The automorphism group $\text{\rm Aut}(L_{m,c})$
is a semidirect product of the normal subgroup
$\text{\rm IA}(L_{m,c})$ of the automorphisms which induce the identity map modulo the
commutator ideal of $L_{m,c}$ and the general
linear group $\text{\rm GL}_m(K)$. Since $\text{\rm Inn}(L_{m,c})\subset \text{\rm IA}(L_{m,c})$,
for the description of
$\text{\rm Out}(L_{m,c})=\text{\rm Aut}(L_{m,c})/\text{\rm Inn}(L_{m,c})$ it is sufficient to know only
$\text{\rm IA}(L_{m,c})/\text{\rm Inn}(L_{m,c})$.

Let $R$ be an algebra over a field $K$ of characteristic 0.
We consider the topology on $R$ induced by the series
$R\supseteq R^2\supseteq R^3\supseteq\cdots$. This is the topology
in which the sets $r+R^k$, $r\in R$, $k\geq 1$, form a basis for the open sets. It is called the
{\it formal power series topology} on $R$.

Let $F_m=L_m/L''_m$ be the free metabelian Lie algebra
of rank $m$. We shall denote the free generators of $F_m$ with the same symbols $y_1,\ldots,y_m$ as the free generators of
$L_{m,c}$, but now $y_i=x_i+L''_m$, $i=1,\ldots,m$. It is well known, see e.g. \cite{Ba}, that
\[
[y_{i_1},y_{i_2},y_{i_{\sigma(i_3)}},\ldots,y_{i_{\sigma(k)}}]
=[y_{i_1},y_{i_2},y_{i_3},\ldots,y_{i_k}],
\]
where $\sigma$ is an arbitrary permutation of $3,\ldots,k$ and that $F_m'$ has a basis consisting of all
\[
[y_{i_1},y_{i_2},y_{i_3},\ldots,y_{i_k}],\quad 1\leq i_j\leq m,\quad i_1>i_2\leq i_3\leq\cdots\leq i_k.
\]
Let $(F_m)_{(k)}$ be the subspace of $F_m$ spanned by all monomials
of total degree $k$ in $y_1,\ldots,y_m$.
We consider the formal power series topology on $F_m$. The completion
$\widehat{F_m}$ of $F_m$ with respect to this topology
may be identified with the complete direct sum $\widehat \bigoplus_{i\geq 1}(F_m)_{(i)}$.
The elements $f\in \widehat{F_m}$ are formal power series
\[
f=f_1+f_2+\cdots,\quad f_i\in (F_m)_{(i)},\quad i=1,2,\ldots.
\]
The composition of two inner automorphisms in $\text{\rm Inn}(L_{m,c})$ is also an inner automorphism.
It can be obtained by
the Baker-Campbell-Hausdorff formula which gives the solution $z$ of the equation $e^z=e^x\cdot e^y$
for non-commuting $x$ and $y$, see e.g. \cite{Bo} and \cite{Se}.
If $x,y$ are the generators of the free associative algebra $A=K\langle x,y\rangle$, then
\[
z=x+y+\frac{[x,y]}{2}-\frac{[x,y,x]}{12}+\frac{[x,y,y]}{12}-\frac{[x,y,x,y]}{24}+\cdots
\]
is a formal power series in the completion $\widehat{A}$ with respect to the formal power series topology.
The homogeneous components of $z$ are Lie elements.
Hence, studying the inner and the outer automorphisms of $L_{m,c}$,
it is convenient to work in the completion $\widehat{F_m}$ and to study the groups of its inner automorphisms and
of its continuous outer automorphisms. Working in the algebra $F_2=L_2/L_2''$
with generators $y_1=x$, $y_2=y$, the element $z\in\widehat{F_2}$ in the
Baker-Campbell-Hausdorff formula has the form
\[
z=x+y+\sum _{a,b\geq 0}c_{ab}[x,y,\underbrace{x,\ldots,x}_{a},\underbrace{y,\ldots,y}_{b}]
=x+y+\sum _{a,b\geq 0}c_{ab}[x,y]\text{\rm ad}^ax\text{\rm ad}^by
\]
\[
=x+y+[x,y]c(\text{\rm ad}x,\text{\rm ad}y),
\]
where
\[
c(t,u)=\sum_{a,b\geq 0}c_{ab}t^au^b\in{\mathbb Q}[[t,u]].
\]
Gerritzen \cite{G} found a nice description of the formal power series in commuting variables $c(t,u)$
which corresponds to $z\in\widehat{F_2}$. Further
we shall use it to obtain an expression of the composition of inner automorphisms
in $\widehat{F_m}$. We present a slightly modified version of the result of Gerritzen due to the fact
that he uses right normed commutators (and not left normed commutators as we do).
Recall that $F_2$ is isomorphic to the Lie subalgebra generated by $x$ and $y$
in the associative algebra $A/C^2$, where $C=A[A,A]A$ is the commutator ideal of $A=K\langle x,y\rangle$.

\begin{proposition}\label{formula of Gerritzen}{\rm(Gerritzen \cite{G})}
If $z\in\widehat{F_2}$ is the solution of the equation $e^z=e^x\cdot e^y$
in the algebra $\widehat{A/C^2}$, then
\[
z=x+y+[x,y]c(\text{\rm ad}x,\text{\rm ad}y),
\]
where
\[
c(t,u)=\frac{e^uh(t)-h(u)}{e^{t+u}-1}\in K[[t,u]],\quad h(v)=\frac{e^v-1}{v}\in K[[v]].
\]
\end{proposition}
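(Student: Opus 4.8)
The plan is to work throughout in $\widehat{A/C^2}$ and to exploit three structural facts about the commutator ideal. First, since $[C,C]\subseteq C^2$, any two elements of the completed commutator ideal commute modulo $C^2$; in particular the operators $t=\text{\rm ad}x$ and $u=\text{\rm ad}y$ commute on $\widehat{F_2'}$, and by the basis of $F_m'$ recorded in the preliminaries (for $m=2$ the elements $[x,y]\,\text{\rm ad}^ax\,\text{\rm ad}^by$) the space $\widehat{F_2'}$ is a free $K[[t,u]]$-module on the single generator $w=[x,y]$. Second, the product of any two elements of the commutator ideal vanishes modulo $C^2$. Third, for $v$ in the commutator ideal one has the conjugation formula $e^{-a}ve^{a}=v\exp(\text{\rm ad}a)$, obtained by differentiating $s\mapsto e^{-sa}ve^{sa}$. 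These let me treat every commutator as an element of the $K[[t,u]]$-module generated by $w$ and drop all quadratic corrections freely.

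Next I would write the unknown solution as $z=x+y+r$ with $r\in\widehat{F_2'}$, which is legitimate since the degree-one part of the Baker--Campbell--Hausdorff series is $x+y$. The equation $e^z=e^xe^y$ is then equivalent to
\[
e^{-(x+y)}e^{(x+y)+r}=e^{-(x+y)}e^xe^y ,
\]
and I would compute the two sides separately modulo $C^2$. For the left-hand side I would use the Duhamel expansion of $e^{(x+y)+r}$, noting that replacing $e^{s((x+y)+r)}$ by $e^{s(x+y)}$ inside the integrand changes it only by terms carrying two commutator factors, hence by zero modulo $C^2$; applying the conjugation formula inside the integral and integrating gives
\[
e^{-(x+y)}e^{(x+y)+r}=1+r\,h(\text{\rm ad}(x+y))=1+r\,h(t+u).
\]

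The crux of the argument is the right-hand side. Here I would introduce the one-parameter family $\Phi(s)=e^{-s(x+y)}e^{sx}e^{sy}$, with $\Phi(0)=1$, and differentiate. After the cancellation $-(x+y)+x=-y$ the derivative collapses to $\Phi'(s)=e^{-s(x+y)}[e^{sx},y]e^{sy}$. Using $e^{sx}ye^{-sx}=ye^{-st}$ together with $yt^k=-w\,t^{k-1}$ one obtains $[e^{sx},y]=-w\,\frac{e^{-st}-1}{t}\,e^{sx}$; then substituting $e^{sx}e^{sy}=e^{s(x+y)}\Phi(s)$, conjugating the commutator factor past $e^{-s(x+y)}$, and discarding the quadratic term $w\cdot\Phi(s)\equiv w$ modulo $C^2$ leaves the clean identity $\Phi'(s)=w\,\frac{e^{s(t+u)}-e^{su}}{t}$. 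Integrating from $0$ to $1$ yields
\[
e^{-(x+y)}e^xe^y=1+\frac{w}{t}\bigl(h(t+u)-h(u)\bigr).
\]

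Comparing the two computations, the constant terms agree and the commutator parts give $r\,h(t+u)=\frac{w}{t}\bigl(h(t+u)-h(u)\bigr)$; writing $r=w\,c(t,u)$ and cancelling the free generator $w$ produces
\[
c(t,u)=\frac{h(t+u)-h(u)}{t\,h(t+u)} .
\]
The final step is a routine power-series manipulation verifying that this equals $\dfrac{e^uh(t)-h(u)}{e^{t+u}-1}$; after clearing denominators the claim reduces to an elementary cancellation of the two terms $\frac{e^u-1}{t}$. The main obstacle is the middle computation: keeping the conjugation signs straight under the convention $v\,\text{\rm ad}a=[v,a]$ (opposite to the standard associative $\text{\rm ad}$), and recognizing the telescoping $\frac{e^{-st}-1}{t}\,e^{s(t+u)}=\frac{e^{su}-e^{s(t+u)}}{t}$, which is exactly what makes the integral evaluate to a difference of $h$-functions.
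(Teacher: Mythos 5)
Your proof is correct, and it takes a genuinely different route from the paper. The paper does not rederive the formula at all: its proof consists of quoting Gerritzen's result in his conventions, $z=x+y+H_0(D_x,D_y)[x,y]$ with $H_0(t,u)=\frac{h(-t)-h(u)}{e^{-t}-e^u}$ and $D_v=-\text{\rm ad}v$, and then performing the convention translation $c(t,u)=H_0(-t,-u)$ with a one-line simplification. You instead reprove Gerritzen's theorem from scratch inside $\widehat{A/C^2}$: the decomposition $z=x+y+r$ with $r\in\widehat{F_2'}$, the Duhamel computation $e^{-(x+y)}e^{(x+y)+r}\equiv 1+r\,h(t+u)\ (\text{\rm mod }C^2)$, the differential equation $\Phi'(s)=w\,\frac{e^{s(t+u)}-e^{su}}{t}$ for $\Phi(s)=e^{-s(x+y)}e^{sx}e^{sy}$, and the resulting closed form $c(t,u)=\frac{h(t+u)-h(u)}{t\,h(t+u)}$, which you then identify with $\frac{e^uh(t)-h(u)}{e^{t+u}-1}$. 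I checked the individual steps: the conjugation identities carry the correct signs under the right-action convention $v\,\text{\rm ad}a=[v,a]$; the freeness of $\widehat{F_2'}$ as a $K[[t,u]]$-module on $w=[x,y]$ justifies cancelling $w$; $h(t+u)$ is invertible since its constant term is $1$, while $h(t+u)-h(u)$ and $e^{-st}-1$ are divisible by $t$, so all your fractions are legitimate power series; and the final identity holds (after cross-multiplication the two sides differ by a multiple of $(e^u-1)-u\,h(u)=0$, exactly the cancellation you describe). As for what each approach buys: the paper's proof is two lines but outsources all content to \cite{G} and is only as reliable as the bookkeeping between left- and right-normed conventions, which is precisely the delicate point; yours is self-contained, verifies the formula independently of the citation, and the intermediate expression $\frac{h(t+u)-h(u)}{t\,h(t+u)}$ makes transparent the remark following the proposition that numerator and denominator share the factor $t+u$ and that after cancellation the denominator is invertible in $K[[t,u]]$. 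The cost is length and the obligation, which you do meet, to note at every step (Duhamel, conjugation, absorbing $\Phi(s)$ against elements of the commutator ideal) that the manipulation is valid modulo $C^2$.
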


\begin{proof}
Following Gerritzen \cite{G}, if $D_v$ is the derivation of the free metabelian algebra $F_2$
defined by $D_v(u)=[v,u]$, $u\in F_2$, then the solution $z\in \widehat{F_2}$ of the equation $e^z=e^x\cdot e^y$
has the form
\[
z=x+y+H_0(D_x,D_y)[x,y],
\]
where
\[
H_0(t,u)=\frac{h(-t)-h(u)}{e^{-t}-e^u}, \quad h(v)=\frac{e^v-1}{v}.
\]
Since $D_x=-\text{\rm ad}x$ and $D_y=-\text{\rm ad}y$, we obtain
\[
z=x+y+H_0(D_x,D_y)[x,y]=x+y+[x,y]H_0(-\text{\rm ad}x,-\text{\rm ad}y)
=x+y+[x,y]c(\text{\rm ad}x,\text{\rm ad}y),
\]
\[
c(t,u)=H_0(-t,-u)=\frac{h(t)-h(-u)}{e^t-e^{-u}}
=\frac{e^uh(t)-h(u)}{e^{t+u}-1}
\]
after easy computations.
\end{proof}

Pay attention that both the nominator and the denominator of $c(t,u)$ are divisible by
$t+u$. After the cancellation of the common factor $t+u$ the denominator becomes an invertible element of $K[[t,u]]$.

Now we collect the necessary information about wreath products, Jacobian matrices
and formal power series. For details and references see e.g. \cite{BD}.
Let $K[t_1,\ldots,t_m]$ be the
(commutative) polynomial algebra over $K$ freely generated by the
variables $t_1,\ldots,t_m$ and let $A_m$ and $B_m$ be the abelian
Lie algebras with bases $\{a_1,\ldots,a_m\}$ and
$\{b_1,\ldots,b_m\}$, respectively. Let $C_m$ be the free right
$K[t_1,\ldots,t_m]$-module with free generators $a_1,\ldots,a_m$.
We give it the structure of a Lie algebra with trivial multiplication.
The abelian wreath product
$A_m\text{\rm wr}B_m$ is equal to the semidirect sum $C_m\leftthreetimes B_m$. The elements
of $A_m\text{\rm wr}B_m$ are of the form
$\sum_{i=1}^ma_if_i(t_1,\ldots,t_m)+\sum_{i=1}^m\beta_ib_i$, where
$f_i$ are polynomials in $K[t_1,\ldots,t_m]$ and $\beta_i\in K$.
The multiplication in $A_m\text{\rm wr}B_m$ is defined by
\[
[C_m,C_m]=[B_m,B_m]=0,
\]
\[
[a_if_i(t_1,\ldots,t_m),b_j]=a_if_i(t_1,\ldots,t_m)t_j,\quad i,j=1,\ldots,m.
\]
Hence $A_m\text{\rm wr}B_m$ is a metabelian Lie algebra and every mapping $\{y_1,\ldots,y_m\}\to A_m\text{\rm wr}B_m$
can be extended to a homomorphism $F_m\to A_m\text{\rm wr}B_m$.
As a special case of the embedding theorem of Shmel'kin \cite{Sh},
the homomorphism $\varepsilon:F_m\to A_m\text{\rm wr}B_m$ defined by
$\varepsilon(y_i)=a_i+b_i$, $i=1,\ldots,m$, is a monomorphism. If
\[
f=\sum[y_i,y_j]f_{ij}(\text{\rm ad}y_1,\ldots,\text{\rm ad}y_m),\quad f_{ij}(t_1,\ldots,t_m)\in K[t_1,\ldots,t_m],
\]
then
\[
\varepsilon(f)=\sum(a_it_j-a_jt_i)f_{ij}(t_1,\ldots,t_m).
\]
The next lemma follows from \cite{Sh}, see also \cite {BD}.

\begin{lemma}\label{metabelian rule}
The element $\sum_{i=1}^ma_if_i(t_1,\ldots,t_m)$ of
$C_m$ belongs to $\varepsilon (F_m')$ if and only if
$\sum_{i=1}^mt_if_i(t_1,\ldots,t_m)=0$.
\end{lemma}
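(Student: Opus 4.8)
The plan is to reformulate the lemma as the statement that $\varepsilon(F_m')$ is precisely the kernel of the $K[t_1,\ldots,t_m]$-module homomorphism
\[
\mu:C_m\to K[t_1,\ldots,t_m],\qquad \mu\Big(\sum_{i=1}^m a_if_i\Big)=\sum_{i=1}^m t_if_i,
\]
where $C_m$ is viewed as the free module with basis $a_1,\ldots,a_m$. First I would record that, as a $K[t_1,\ldots,t_m]$-module under the action $t_k\mapsto\text{\rm ad}y_k$, the derived algebra $F_m'$ is generated by the commutators $[y_i,y_j]$; this is immediate from the stated basis of $F_m'$, since a left-normed basis commutator $[y_{i_1},y_{i_2},y_{i_3},\ldots,y_{i_k}]$ equals $[y_{i_1},y_{i_2}]\text{\rm ad}y_{i_3}\cdots\text{\rm ad}y_{i_k}$. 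Because $\varepsilon(w)\in C_m$ for $w\in F_m'$, and $[\varepsilon(w),b_k]=\varepsilon(w)t_k$ while $[\varepsilon(w),a_k]=0$, the embedding $\varepsilon$ intertwines the action of $\text{\rm ad}y_k$ with multiplication by $t_k$. Hence $\varepsilon(F_m')$ is the submodule of $C_m$ generated by the elements $\varepsilon([y_i,y_j])=a_it_j-a_jt_i$, $1\le i<j\le m$.

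The necessity is then the easy inclusion $\varepsilon(F_m')\subseteq\ker\mu$. Since $\mu(a_it_j-a_jt_i)=t_it_j-t_jt_i=0$ and $\mu$ is $K[t_1,\ldots,t_m]$-linear, every generator of $\varepsilon(F_m')$ lies in $\ker\mu$; concretely, if $\sum a_if_i=\varepsilon(f)$ with $f\in F_m'$, then $\sum t_if_i=0$.

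For the sufficiency I must show $\ker\mu\subseteq\varepsilon(F_m')$, that is, that every relation $\sum_{i=1}^m t_if_i=0$ is a $K[t_1,\ldots,t_m]$-combination of the trivial relations $a_it_j-a_jt_i$. This is exactly the assertion that the first syzygy module of $t_1,\ldots,t_m$ is generated by the Koszul syzygies, which holds because $t_1,\ldots,t_m$ is a regular sequence in the polynomial ring. I would argue by induction on $m$. Given $\sum_{i=1}^m t_if_i=0$, the equation $t_mf_m=-\sum_{i<m}t_if_i$ shows $t_mf_m\in(t_1,\ldots,t_{m-1})$; since $t_m$ is a non-zero-divisor modulo $(t_1,\ldots,t_{m-1})$, I obtain $f_m=\sum_{i<m}t_ih_i$ for suitable $h_i$. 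Subtracting the trivial relations $\sum_{i<m}h_i(a_it_m-a_mt_i)$ annihilates the $a_m$-component and leaves a vector supported on $a_1,\ldots,a_{m-1}$ whose coefficients still satisfy a relation of the same type; applying the inductive hypothesis, now with $K[t_m]$ absorbed into the base ring and $t_1,\ldots,t_{m-1}$ the regular sequence, expresses it through the $a_it_j-a_jt_i$ with $i<j\le m-1$.

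The main obstacle is precisely this sufficiency step: the necessity and the module-theoretic reformulation are formal, but recovering an element of $\varepsilon(F_m')$ from a syzygy genuinely uses the regularity of $t_1,\ldots,t_m$ and the inductive reduction above. Once the syzygy description is in hand, combining the two inclusions yields $\varepsilon(F_m')=\ker\mu$, which is the statement of the lemma.
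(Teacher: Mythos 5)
Your proof is correct, and it cannot be compared line-by-line with the paper's, because the paper gives no proof at all: Lemma \ref{metabelian rule} is quoted there as a consequence of Shmel'kin's embedding theorem \cite{Sh}, with \cite{BD} as a secondary reference. What you have done is supply the self-contained argument that those references encapsulate, and it is the right one. The reformulation $\varepsilon(F_m')=\ker\mu$ is legitimate: the metabelian law makes $F_m'$ a $K[t_1,\ldots,t_m]$-module via $t_k\mapsto\text{\rm ad}y_k$ (the operators $\text{\rm ad}y_k$ commute on $F_m'$ because $F_m''=0$), the stated basis shows this module is generated by the $[y_i,y_j]$, and since $[\varepsilon(w),a_k+b_k]=\varepsilon(w)t_k$ for $w\in F_m'$, the map $\varepsilon$ is a module homomorphism on $F_m'$, so its image is the submodule generated by the Koszul elements $a_it_j-a_jt_i$. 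The ``only if'' direction is then formal, and the ``if'' direction is exactly the statement that the Koszul syzygies of the regular sequence $t_1,\ldots,t_m$ generate the full syzygy module, which your induction (passing to the base ring $K[t_m]$ and the regular sequence $t_1,\ldots,t_{m-1}$) proves correctly; this is the genuine content of the lemma, and you identified it as such. One harmless slip: the element $w=\sum_{i<m}h_i(a_it_m-a_mt_i)$ has $a_m$-component $-a_m\sum_{i<m}t_ih_i=-a_mf_m$, so \emph{subtracting} $w$ doubles the $a_m$-component of your vector instead of annihilating it; you should add $w$, or equivalently subtract $\sum_{i<m}h_i(a_mt_i-a_it_m)$. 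The rest of the induction is unaffected. A side benefit of your argument over the bare citation is that it transparently extends to the power series setting (the sequence $t_1,\ldots,t_m$ is also regular in $K[[t_1,\ldots,t_m]]$), which is the form in which the paper actually invokes the lemma in the proof of Theorem \ref{iouthatF}.
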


The embedding of $F_m$ into $A_m\text{\rm wr}B_m$ allows to introduce partial derivatives
in $F_m$ with values in $K[t_1,\ldots,t_m]$. If $f\in F_m$ and
\[
\varepsilon(f)=\sum_{i=1}^m\beta_ib_i+\sum_{i=1}^ma_if_i(t_1,\ldots,t_m),\quad \beta_i\in K,f_i\in K[t_1,\ldots,t_m],
\]
then
\[
\frac{\partial f}{\partial y_i}=f_i(t_1,\ldots,t_m).
\]
The Jacobian matrix $J(\phi)$ of an endomorphism $\phi$ of $F_m$
is defined as
\[
J(\phi)=\left(\frac {\partial \phi({y_j})}{\partial y_i}\right)
=\left(\begin{matrix}
\frac {\partial\phi({y_1})}{\partial y_1}&\cdots&\frac {\partial \phi({y_m})}{\partial y_1}\\
\vdots&\ddots&\vdots\\
\frac {\partial\phi({y_1})}{\partial y_m}&\cdots&\frac {\partial \phi({y_m})}{\partial y_m}\\
\end{matrix}\right)\in M_m(K[t_1,\ldots,t_m]),
\]
where $M_m(K[t_1,\ldots,t_m])$ is the associative algebra of $m\times m$ matrices with entries from
$K[t_1,\ldots,t_m]$. Let $\text{\rm IE}(F_m)$ be the multiplicative semigroup of all endomorphisms
of $F_m$ which are identical modulo the commutator ideal $F_m'$.
Let $I_m$ be the identity $m\times m$ matrix and let $S$ be the subspace of
$M_m(K[t_1,\ldots,t_m])$ defined by
\[
S=\left \{(f_{ij})\in M_m(K[t_1,\ldots,t_m]) \mid \sum_{i=1}^mt_if_{ij}=0,j=1,\ldots,m\right \}.
\]
Clearly $I_m+S$ is a subsemigroup of the multiplicative group of $M_m(K[t_1,\ldots,t_m])$.
The completion with respect to the formal power series topology of $A_m\text{\rm wr}B_m$
is the semidirect sum $\widehat{C_m}\leftthreetimes B_m$, where
\[
\widehat{C_m}=\bigoplus_{i=1}^ma_iK[[t_1,\ldots,t_m]].
\]
If $\phi\in \text{\rm IE}(\widehat{F_m})$, then $J(\phi)=I_m+(s_{ij})$,
where $s_{ij}\in K[[t_1,\ldots,t_m]]$.
It is easy to check that if $\phi,\psi \in \text{\rm IE}(\widehat{F_m})$ then $J(\phi \psi)=J(\phi)J(\psi)$.
The following proposition is well known, see e.g. \cite {BD}.

\begin{proposition}\label{met}
The map $J:\text{\rm IE}(F_m)\to I_m+S$ defined by
$\phi\to J(\phi)$ is an isomorphism of the semigroups $\text{\rm IE}(F_m)$ and $I_m+S$.
It extends to an isomorphism
between the group $\text{\rm IE}(\widehat{F_m})=\text{\rm IA}(\widehat{F_m})$ of continuous {\rm IA}- automorphisms
of $\widehat{F_m}$ and the multiplicative group $I_m+\widehat{S}$.
\end{proposition}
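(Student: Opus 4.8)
The plan is to show that $J$ is a well-defined bijective semigroup homomorphism onto $I_m+S$, and then to transfer the argument essentially verbatim to the completion, where the only genuinely new points are that the target is a group and that every endomorphism in question is invertible. First I would check that $J$ lands in $I_m+S$. For $\phi\in\text{\rm IE}(F_m)$ we have $\phi(y_j)=y_j+w_j$ with $w_j\in F_m'$, so $J(\phi)=I_m+(\partial w_j/\partial y_i)_{ij}$. Since $\varepsilon(w_j)=\sum_i a_i(\partial w_j/\partial y_i)$ has no $B_m$-component, Lemma \ref{metabelian rule} gives $\sum_i t_i(\partial w_j/\partial y_i)=0$ for every $j$, so each column of $J(\phi)-I_m$ lies in $S$. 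The homomorphism property $J(\phi\psi)=J(\phi)J(\psi)$ is already recorded above.

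For injectivity I would use that $\varepsilon(\phi(y_j))=b_j+\sum_i a_i(\partial\phi(y_j)/\partial y_i)$, where the $B_m$-part $b_j$ is fixed and the $C_m$-part is exactly the $j$-th column of $J(\phi)$; hence $J(\phi)=J(\psi)$ forces $\varepsilon(\phi(y_j))=\varepsilon(\psi(y_j))$ for all $j$, and since $\varepsilon$ is a monomorphism, $\phi=\psi$. For surjectivity, given $M=I_m+(s_{ij})\in I_m+S$, each column satisfies $\sum_i t_i s_{ij}=0$, so the converse direction of Lemma \ref{metabelian rule} yields $w_j\in F_m'$ with $\varepsilon(w_j)=\sum_i a_i s_{ij}$, i.e. $\partial w_j/\partial y_i=s_{ij}$; then $y_j\mapsto y_j+w_j$ extends to $\phi\in\text{\rm IE}(F_m)$ with $J(\phi)=M$. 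This establishes the semigroup isomorphism $\text{\rm IE}(F_m)\cong I_m+S$.

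For the completion the same three steps go through word for word after replacing $K[t_1,\ldots,t_m]$ by $K[[t_1,\ldots,t_m]]$, $C_m$ by $\widehat{C_m}$, and $F_m'$ by its closure; the extended $\varepsilon$ stays injective because it is injective on each homogeneous component. What must be added is that $I_m+\widehat S$ is a group and that $\text{\rm IE}(\widehat{F_m})=\text{\rm IA}(\widehat{F_m})$. Writing $t=(t_1,\ldots,t_m)$ as a row vector, one has $\widehat S=\{F:tF=0\}$, and the defining relation $tF=0$ already forces the entries of $F$ to have zero constant term. For $F,G\in\widehat S$ a direct computation gives $t(I_m+F)(I_m+G)=t$, so $I_m+\widehat S$ is closed under multiplication; moreover $\det(I_m+F)$ is a unit of $K[[t_1,\ldots,t_m]]$, so $(I_m+F)^{-1}=I_m+G$ exists in $M_m(K[[t_1,\ldots,t_m]])$, and applying $t$ on the left to $(I_m+F)(I_m+G)=I_m$ shows $tG=0$, i.e. the inverse again lies in $I_m+\widehat S$. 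Finally, for $\phi\in\text{\rm IE}(\widehat{F_m})$, surjectivity provides $\psi$ with $J(\psi)=J(\phi)^{-1}$; then $J(\phi\psi)=J(\psi\phi)=I_m$, and injectivity forces $\phi\psi=\psi\phi=\text{\rm id}$, so $\phi$ is an automorphism and $\text{\rm IE}(\widehat{F_m})=\text{\rm IA}(\widehat{F_m})$.

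I expect the main obstacle to be precisely this completion step: checking that $I_m+\widehat S$ is closed under inverses and that continuous {\rm IE}-endomorphisms are automatically invertible. The polynomial part is routine once Lemma \ref{metabelian rule} and the injectivity of $\varepsilon$ are in hand.
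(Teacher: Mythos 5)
Your proof is correct, but there is nothing in the paper to compare it against: the paper does not prove Proposition \ref{met} at all, it states the result as well known and refers to \cite{BD}. What you have written is a sound, self-contained reconstruction using exactly the ingredients the paper assembles beforehand: well-definedness and injectivity of $J$ via Shmel'kin's embedding $\varepsilon$ and the ``only if'' direction of Lemma \ref{metabelian rule}; surjectivity via the ``if'' direction together with the fact that $F_m$ is relatively free, so $y_j\mapsto y_j+w_j$ really does extend to an endomorphism; and multiplicativity of $J$ as quoted from the text. Your handling of the completion is also right, and you correctly isolate the two points that carry the content there: that $I_m+\widehat{S}$ is closed under inversion (the degree-one part of $tF=0$ kills the constant terms of $F$, so $\det(I_m+F)$ is a unit, and multiplying $(I_m+F)(I_m+G)=I_m$ by $t$ on the left gives $tG=0$), and that injectivity plus surjectivity of $J$ then force every continuous IE-endomorphism to be an automorphism, i.e. $\text{\rm IE}(\widehat{F_m})=\text{\rm IA}(\widehat{F_m})$. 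Two glosses would need to be spelled out in a full write-up, though both are routine: (i) the completed form of Lemma \ref{metabelian rule} and the identification $\widehat{S}=\{F\in M_m(K[[t_1,\ldots,t_m]]): tF=0\}$ follow by applying the polynomial statements to each homogeneous component, since the conditions $\sum_i t_if_{ij}=0$ are compatible with the grading; (ii) a continuous endomorphism of $\widehat{F_m}$ is determined by its values on $y_1,\ldots,y_m$, and any assignment $y_j\mapsto y_j+w_j$ with $w_j$ in the closed commutator ideal defines one. A minor stylistic alternative: instead of invoking determinants, the inverse of $I_m+F$ can be produced as the geometric series $I_m-F+F^2-\cdots$, which converges in the formal power series topology precisely because the entries of $F$ have no constant term.
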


\section{Main Results}

In this section we give a formula for the multiplication of inner automorphisms of $\widehat{F_m}$. We
also find the explicit form of the Jacobian matrix of the inner automorphisms of $\widehat{F_m}$ and of
the coset representatives of the continuous outer automorphisms in $\text{\rm IOut}(\widehat{F_m})$. Finally we
transfer the obtained results to the algebra $L_{m,c}$ and obtain the description of $\text{\rm Inn}(L_{m,c})$
and $\text{\rm IOut}(L_{m,c})$.

Let $\text{\rm Inn}(\widehat{F_m})$ denote the set of all inner automorphisms of $\widehat{F_m}$ which are of the form
$\exp(\text{\rm ad}u)$, $u\in \widehat{F_m}$. Our first goal is to give a multiplication rule for
$\text{\rm Inn}(\widehat{F_m})$. For $u\in\widehat{F_m}$ we fix the notation $u=\overline{u}+u_0$,
where $\overline{u}$ is the linear component of $u$ and $u_0\in \widehat{F_m'}$.

\begin{theorem}\label{multiplication in Inn}
Let $u,v\in \widehat{F_m}$. Then the solution $w=w(u,v)\in \widehat{F_m}$ of the equation
$\exp(\text{\rm ad}u)\cdot \exp(\text{\rm ad}v)=\exp(\text{\rm ad}w)$ is
\[
w=H(\overline{u},\overline{v})+u_0(1+(\text{\rm ad}\overline{v})c(\text{\rm ad}\overline{u},\text{\rm ad}\overline{v}))
+v_0(1-(\text{\rm ad}\overline{u})c(\text{\rm ad}\overline{u},\text{\rm ad}\overline{v})),
\]
where
\[
H(\overline{u},\overline{v})=\overline{u}+\overline{v}+[\overline{u},\overline{v}]c(\text{\rm ad}\overline{u},\text{\rm ad}\overline{v}),
\]
\[
c(t_1,t_2)=\frac{e^{t_2}h(t_1)-h(t_2)}{e^{t_1+t_2}-1},\quad h(t)=\frac{e^t-1}{t}.
\]
\end{theorem}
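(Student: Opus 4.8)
The plan is to move the whole computation inside $\widehat{F_m}$: first recognize $w$ as the Baker--Campbell--Hausdorff series of $u$ and $v$ taken in $\widehat{F_m}$, and then evaluate that series in closed form using Proposition~\ref{formula of Gerritzen} together with the single structural fact $[\widehat{F_m'},\widehat{F_m'}]=0$.

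First I would reduce from operators to elements. As $u,v\in\widehat{F_m}$ have no constant term, $\text{ad}\,u$ and $\text{ad}\,v$ raise degree by at least $1$, so $\exp(\text{ad}\,u)$ and $\exp(\text{ad}\,v)$ are well-defined continuous operators and the classical Baker--Campbell--Hausdorff theorem gives $\exp(\text{ad}\,u)\exp(\text{ad}\,v)=\exp(Z)$, where $Z$ is the universal BCH Lie series evaluated at the operators $\text{ad}\,u,\text{ad}\,v$. Since $\text{ad}\colon\widehat{F_m}\to\text{End}(\widehat{F_m})$ is a homomorphism of Lie algebras, each Lie monomial in $\text{ad}\,u,\text{ad}\,v$ equals $\text{ad}$ of the corresponding monomial in $u,v$; summing the series yields $Z=\text{ad}\,w$ with $w=\text{BCH}(u,v)$ computed in $\widehat{F_m}$. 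Thus this $w$ solves the stated equation, and it remains only to evaluate $\text{BCH}(u,v)$.

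Next I would apply Proposition~\ref{formula of Gerritzen} directly to the pair $(u,v)$: the subalgebra of $\widehat{F_m}$ generated by $u$ and $v$ is a continuous homomorphic image of $\widehat{F_2}$, so Gerritzen's formula gives $w=u+v+[u,v]\,c(\text{ad}\,u,\text{ad}\,v)$, where the operators act on $[u,v]\in\widehat{F_m'}$ (on which they commute, since $[z,a,b]=[z,b,a]$ for $z\in\widehat{F_m'}$). I now substitute $u=\overline{u}+u_0$, $v=\overline{v}+v_0$ and apply two consequences of metabelianness. By bilinearity $[u,v]=[\overline{u},\overline{v}]+[u_0,\overline{v}]+[\overline{u},v_0]$, because $[u_0,v_0]\in[\widehat{F_m'},\widehat{F_m'}]=0$. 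Moreover $\text{ad}\,u_0$ and $\text{ad}\,v_0$ annihilate $\widehat{F_m'}$ (again by $[\widehat{F_m'},\widehat{F_m'}]=0$); since $c(\text{ad}\,u,\text{ad}\,v)$ is applied to $[u,v]\in\widehat{F_m'}$ and each factor keeps us inside $\widehat{F_m'}$, I may replace $\text{ad}\,u$ by $\text{ad}\,\overline{u}$ and $\text{ad}\,v$ by $\text{ad}\,\overline{v}$ throughout, so that $c(\text{ad}\,u,\text{ad}\,v)[u,v]=c(\text{ad}\,\overline{u},\text{ad}\,\overline{v})[u,v]$.

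Writing $c=c(\text{ad}\,\overline{u},\text{ad}\,\overline{v})$ and collecting terms then gives
\[
w=(\overline{u}+\overline{v}+[\overline{u},\overline{v}]c)+(u_0+[u_0,\overline{v}]c)+(v_0+[\overline{u},v_0]c),
\]
and rewriting $[u_0,\overline{v}]=u_0(\text{ad}\,\overline{v})$ and $[\overline{u},v_0]=-v_0(\text{ad}\,\overline{u})$ converts the last two groups into $u_0(1+(\text{ad}\,\overline{v})c)$ and $v_0(1-(\text{ad}\,\overline{u})c)$, which is precisely the asserted formula with $H(\overline{u},\overline{v})=\overline{u}+\overline{v}+[\overline{u},\overline{v}]c$. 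I expect the delicate step to be the collapse of $c(\text{ad}\,u,\text{ad}\,v)$ to $c(\text{ad}\,\overline{u},\text{ad}\,\overline{v})$ on $\widehat{F_m'}$: it uses $[\widehat{F_m'},\widehat{F_m'}]=0$ twice, once so that $\text{ad}\,u_0,\text{ad}\,v_0$ vanish on the commutator ideal and once so that $\text{ad}\,\overline{u},\text{ad}\,\overline{v}$ commute there, making the substitution into the commutative series $c(t_1,t_2)$ legitimate. The remaining manipulations are routine bilinear bookkeeping.
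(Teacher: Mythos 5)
Your proof is correct and takes essentially the same route as the paper: Gerritzen's formula (Proposition \ref{formula of Gerritzen}) transported through the adjoint representation, followed by the metabelian identities that collapse $c(\mathrm{ad}\,u,\mathrm{ad}\,v)$ to $c(\mathrm{ad}\,\overline{u},\mathrm{ad}\,\overline{v})$ on $\widehat{F_m'}$ and split $[u,v]$ into the stated three terms. The only difference is bookkeeping: the paper applies Gerritzen to the operators $x=\mathrm{ad}\,u$, $y=\mathrm{ad}\,v$ and then uses that $\widehat{F_m}$ is centreless (so $\mathrm{ad}$ is faithful) to recover $w=u+v+[u,v]c(\mathrm{ad}\,u,\mathrm{ad}\,v)$, whereas you reach the same identity by writing $w=\mathrm{BCH}(u,v)$ via continuity of $\mathrm{ad}$ and specializing Gerritzen's formula from $\widehat{F_2}$ to the pair $(u,v)$; both steps are sound.
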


\begin{proof}
The adjoint operator $\text{\rm ad}:\widehat{F_m}\to\text{\rm End}_K\widehat{F_m}$ is a Lie algebra homomorphism.
The algebra $\text{\rm ad}(\widehat{F_m})$ is metabelian, as a homomorphic image of $\widehat{F_m}$.
Hence we may apply the formula of Gerritzen from Proposition \ref{formula of Gerritzen}
for $x=\text{\rm ad}u$, $y=\text{\rm ad}v$, $z=\text{\rm ad}w$. Therefore
\[
\text{\rm ad}w=\text{\rm ad}u+\text{\rm ad}v+[\text{\rm ad}u,\text{\rm ad}v]c(\text{\rm ad}(\text{\rm ad}u),\text{\rm ad}(\text{\rm ad}v))
\]
\[
=\text{\rm ad}u+\text{\rm ad}v+\text{\rm ad}([u,v]c(\text{\rm ad}u,\text{\rm ad}v)).
\]
Since the algebra $\widehat{F_m}$ has no centre, the adjoint representation is faithful and
\[
w=u+v+[u,v]c(\text{\rm ad}u,\text{\rm ad}v).
\]
The metabelian low gives that
\[
[u,v]c(\text{\rm ad}u,\text{\rm ad}v)=[\overline{u}+u_0,\overline{v}+v_0]c(\text{\rm ad}(\overline{u}+u_0),\text{\rm ad}(\overline{v}+v_0))
\]
\[
=([\overline{u},\overline{v}]+[u_0,\overline{v}]-[v_0,\overline{u}])c(\text{\rm ad}\overline{u},\text{\rm ad}\overline{v}),
\]
\[
w=(\overline{u}+u_0)+(\overline{v}+v_0)
+([\overline{u},\overline{v}]+[u_0,\overline{v}]-[v_0,\overline{u}])c(\text{\rm ad}\overline{u},\text{\rm ad}\overline{v})
\]
\[
=H(\overline{u},\overline{v})+u_0(1+(\text{\rm ad}\overline{v})c(\text{\rm ad}\overline{u},\text{\rm ad}\overline{v}))
+v_0(1-(\text{\rm ad}\overline{u})c(\text{\rm ad}\overline{u},\text{\rm ad}\overline{v})).
\]
\end{proof}

Our next objective is to give the explicit form of the Jacobian matrix of the inner automorphisms of $\widehat{F_m}$.

\begin{theorem}\label{Jacobian of inner autos}
Let $u=\overline{u}+u_0\in \widehat{F_m}$, where
\[
\overline{u}=\sum_{r=1}^mc_ry_r, \quad c_r\in K, r=1,\ldots,m,
\]
is the linear component of $u$ and
\[
u_0=\sum_{p>q} [y_p,y_q]h_{pq}(\text{\rm ad}y_q,\ldots,\text{\rm ad}y_m).
\]
Then
\[
J(\exp(\text{\rm ad}u))=J(\exp(\text{\rm ad}\overline{u}))+D_0T=I_m+(\overline{D}+D_0)T,
\]
\[
\overline{D}=\left(\frac{\partial[y_j,\overline{u}]}{\partial y_i}\right),
\quad D_0=\left(\frac{\partial[y_j,u_0]}{\partial y_i}\right),
\]
\[
T=\sum_{k\geq 0}\frac{t^k}{(k+1)!},\quad t=\sum_{r=1}^mc_rt_r.
\]
More precisely
\[
\overline{D}=\left(
\begin{array}{cccc}
c_2t_2+c_3t_3+\cdots+c_mt_m&-c_1t_2&\cdots&-c_1t_m\\
-c_2t_1&c_1t_1+c_3t_3+\cdots+c_mt_m&\cdots&-c_2t_m\\
\vdots&\vdots&\ddots&\vdots\\
-c_mt_1&-c_mt_2&\cdots&\sum_{k=1}^{m-1}c_kt_k
\end{array}
\right),
\]
\[
D_0=\left(
\begin{array}{cccc}
-t_1f_1&-t_2f_1&\cdots&-t_mf_1\\
-t_1 f_2&-t_2f_2&\cdots&-t_mf_2\\
\vdots&\vdots&\ddots&\vdots\\
-t_1f_m&-t_2f_m&\cdots&-t_mf_m
\end{array}
\right),
\]
where
\[
f_i=\sum_{p>q} \frac{\partial\left([y_p,y_q]h_{pq}(\text{\rm ad}y_q,\ldots,\text{\rm ad}y_m)\right)}{\partial y_i}
\]
\[
=\sum_{q=1}^{i-1}t_qh_{iq}(t_q,\ldots,t_m)-\sum_{p=i+1}^mt_ph_{pi}(t_i,\ldots,t_m).
\]
\end{theorem}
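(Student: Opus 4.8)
The plan is to apply $\exp(\text{\rm ad}u)$ directly to each generator $y_j$, exploit the metabelian law to collapse the exponential into a single scalar factor $T$, and then read off the Jacobian entries through the Shmel'kin embedding $\varepsilon$. First I would write
\[
\exp(\text{\rm ad}u)(y_j)=\sum_{k\geq 0}\frac{1}{k!}y_j(\text{\rm ad}u)^k,
\]
and note that $y_j(\text{\rm ad}u)=[y_j,u]\in\widehat{F_m'}$. The crucial observation is that for every $w\in\widehat{F_m'}$ one has $[w,u_0]=0$, since $w$ and $u_0$ both lie in the commutator ideal and $\widehat{F_m}$ is metabelian; hence $w(\text{\rm ad}u)=[w,\overline{u}]=w(\text{\rm ad}\overline{u})$. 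Consequently $y_j(\text{\rm ad}u)^k=[y_j,u](\text{\rm ad}\overline{u})^{k-1}$ for all $k\geq 1$, so that
\[
\exp(\text{\rm ad}u)(y_j)=y_j+[y_j,u]\sum_{k\geq 1}\frac{1}{k!}(\text{\rm ad}\overline{u})^{k-1}=y_j+[y_j,u]\,T(\text{\rm ad}\overline{u}).
\]

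Next I would pass to the Jacobian via $\varepsilon$. On $\widehat{F_m'}$ the operator $\text{\rm ad}y_r$ acts as multiplication by $t_r$ in the polynomial part (directly from $[a_if,b_r]=a_ift_r$), so $\text{\rm ad}\overline{u}=\sum_r c_r\,\text{\rm ad}y_r$ acts as multiplication by $t=\sum_r c_rt_r$ and $T(\text{\rm ad}\overline{u})$ as multiplication by the scalar series $T=\sum_{k\geq 0}t^k/(k+1)!$. Since $\partial/\partial y_i$ merely extracts the coefficient of $a_i$ in $\varepsilon(\cdot)$, and $T$ acts as a scalar on the polynomial part, this yields
\[
\frac{\partial\,\exp(\text{\rm ad}u)(y_j)}{\partial y_i}=\delta_{ij}+\left(\frac{\partial[y_j,u]}{\partial y_i}\right)T,
\]
that is $J(\exp(\text{\rm ad}u))=I_m+DT$ with $D=(\partial[y_j,u]/\partial y_i)$. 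Splitting $[y_j,u]=[y_j,\overline{u}]+[y_j,u_0]$ gives $D=\overline{D}+D_0$, and the special case $u_0=0$ identifies $I_m+\overline{D}T$ with $J(\exp(\text{\rm ad}\overline{u}))$, which produces the first displayed equality.

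Finally I would compute the two blocks explicitly. From $\varepsilon([y_j,y_r])=a_jt_r-a_rt_j$ I get $\partial[y_j,y_r]/\partial y_i=\delta_{ij}t_r-\delta_{ir}t_j$, whence $\overline{D}_{ij}=\delta_{ij}t-c_it_j$, which is exactly the displayed matrix (diagonal entries $\sum_{r\neq i}c_rt_r$, off-diagonal entries $-c_it_j$). For $D_0$, since $u_0\in\widehat{F_m'}$ we have $[y_j,u_0]=-u_0(\text{\rm ad}y_j)$, and applying $\varepsilon$ multiplies $\varepsilon(u_0)=\sum_k a_kf_k$ by $-t_j$; hence $(D_0)_{ij}=-t_jf_i$, matching the stated matrix. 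To obtain the closed form of $f_i=\partial u_0/\partial y_i$ I would use $\partial([y_p,y_q]h_{pq})/\partial y_i=(\delta_{ip}t_q-\delta_{iq}t_p)h_{pq}$ and split the sum $\sum_{p>q}$ according to the contributions $p=i$ (giving the first sum $\sum_{q=1}^{i-1}t_qh_{iq}$) and $q=i$ (giving the second sum $-\sum_{p=i+1}^{m}t_ph_{pi}$).

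The only genuinely substantive step is the collapse in the first paragraph: the metabelian identity $[\widehat{F_m'},u_0]=0$ turns the full exponential of $\text{\rm ad}u$ into multiplication by the single scalar series $T(\text{\rm ad}\overline{u})$, and this is what makes the Jacobian factor as $I_m+DT$. Everything afterwards is a routine computation of partial derivatives through $\varepsilon$. The points demanding care are the sign conventions (our brackets are left normed and $\text{\rm ad}y_j$ acts on the right, so $[y_j,u_0]=-u_0(\text{\rm ad}y_j)$) and a brief check that the series converges in the formal power series topology, which holds because $y_j(\text{\rm ad}u)^k$ has degree at least $k+1$. As a consistency check one can verify $\sum_i t_i(\overline{D}+D_0)_{ij}=0$, using $\sum_i t_if_i=0$ from Lemma \ref{metabelian rule}, confirming that $J(\exp(\text{\rm ad}u))\in I_m+\widehat{S}$ as required by Proposition \ref{met}.
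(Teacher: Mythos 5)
Your proposal is correct and follows essentially the same route as the paper: the key collapse $\exp(\text{\rm ad}u)(y_j)=y_j+[y_j,u]\,T(\text{\rm ad}\overline{u})$ via the fact that $\text{\rm ad}u_0$ annihilates $\widehat{F_m'}$, followed by reading off the entries $\delta_{ij}t-c_it_j$ and $-t_jf_i$ through the Shmel'kin embedding, is exactly the paper's argument (the paper merely splits $[y_j,u]$ into $[y_j,\overline{u}]+[y_j,u_0]$ before differentiating rather than after). Your added convergence remark and the consistency check $\sum_i t_i(\overline{D}+D_0)_{ij}=0$ are harmless extras.
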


\begin{proof}
If $u=\overline{u}+u_0$, then
\[
\exp(\text{\rm ad}u)(y_j)=y_i+[y_j,u]\sum_{k\geq 0}\frac{(\text{\rm ad}\overline{u})^k}{(k+1)!},\quad j=1,\ldots,m,
\]
because $u_0\in\widehat{F_m}$ and $\text{\rm ad}u_0$ acts trivially on the commutator ideal of $\widehat{F_m}$. Hence
\[
\exp(\text{\rm ad}u)(y_j)=y_i+[y_j,\overline{u}+u_0]\sum_{k\geq 0}\frac{(\text{\rm ad}\overline{u})^k}{(k+1)!}
\]
\[
=y_i+[y_j,\overline{u}]\sum_{k\geq 0}\frac{(\text{\rm ad}\overline{u})^k}{(k+1)!}
+[y_j,u_0]\sum_{k\geq 0}\frac{(\text{\rm ad}\overline{u})^k}{(k+1)!}
\]
\[
=\exp(\text{\rm ad}\overline{u})(y_j)+[y_j,u_0]\sum_{k\geq 0}\frac{(\text{\rm ad}\overline{u})^k}{(k+1)!}.
\]
Easy calculations give
\[
\frac{\partial\exp(\text{\rm ad}\overline{u})(y_j)}{\partial y_i}
=\delta_{ij}+\frac{\partial[y_j,\overline{u}]}{\partial y_i}\sum_{k\geq 0}\frac{t^k}{(k+1)!},
\]
\[
\frac{\partial}{\partial y_i}[y_j,u_0]\sum_{k\geq 0}\frac{(\text{\rm ad}\overline{u})^k}{(k+1)!}
=\frac{\partial [y_j,u_0]}{\partial y_i}\sum_{k\geq 0}\frac{t^k}{(k+1)!},
\]
\[
i,j=1,\ldots,m,\quad t=\sum_{p=1}^mc_pt_p,
\]
where $\delta_{ij}$ is Kronecker symbol.
This gives the expression
\[
J(\exp(\text{\rm ad}u))=I_m+(\overline{D}+D_0)T,
\quad T=\sum_{k\geq 0}\frac{t^k}{(k+1)!},
\]
\[
\overline{D}=\left(\frac{\partial[y_j,\overline{u}]}{\partial y_i}\right),
\quad D_0=\left(\frac{\partial[y_j,u_0]}{\partial y_i}\right).
\]
Since
\[
\frac{\partial[y_j,\overline{u}]}{\partial y_i}=\frac{\partial}{\partial y_i}\left[y_j,\sum_{r=1}^mc_ry_r\right]
=\frac{\partial}{\partial y_i}\sum_{r\neq j}c_r[y_j,y_r]
=\begin{cases}
\sum_{r\neq j}c_rt_r&i=j,\\
-c_it_j&i\neq j,\end{cases}
\]
we obtain the desired form of the matrix $\overline{D}$. Further,
\[
[y_j,u_0]=-\left(\sum_{p>q} [y_p,y_q]h_{pq}(\text{\rm ad}y_q,\ldots,\text{\rm ad}y_m)\right)\text{\rm ad}y_j
\]
\[
\frac{\partial[y_j,u_0]}{\partial y_i}=-t_j\sum_{p>q}\frac{\partial [y_p,y_q]}{\partial y_i}h_{pq}(t_q,\ldots,t_m),
\quad
\frac{\partial [y_p,y_q]}{\partial y_i}
=\begin{cases}
t_q&p=i,\\
 -t_p&q=i,\\
0&p,q\not=i,
\end{cases}
\]
\[
\frac{\partial[y_j,u_0]}{\partial y_i}=-t_jf_i(t_1,\ldots,t_m),
\]
\[
f_i(t_1,\ldots,t_m)=\sum_{q=1}^{i-1}t_qh_{iq}(t_q,\ldots,t_m)-\sum_{p=i+1}^mt_ph_{pi}(t_i,\ldots,t_m).
\]
In this way we obtain the explicit form of the matrix $D_0$.
\end{proof}

Now we shall find the coset representatives of the normal subgroup $\text{\rm Inn}(\widehat{F_m})$
of the group $\text{\rm IA}(\widehat{F_m})$ of IA-automorphisms $\widehat{F_m}$, i.e.,
we shall find a set of IA-automorphisms $\theta$ of $\widehat{F_m}$ such that
the factor group $\text{\rm IOut}(\widehat{F_m})=\text{\rm IA}(\widehat{F_m})/\text{\rm Inn}(\widehat{F_m})$
of the outer IA-automorphisms of $\widehat{F_m}$
is presented as the disjoint union of the cosets
$\text{\rm Inn}(\widehat{F_m})\theta$.

\begin{theorem}\label{iouthatF}
Let $\Theta$ be the set of automorphisms $\theta$ of $\widehat{F_m}$ with Jacobian matrix of the form
\[
J(\theta)=I_m+\left(\begin{array}{llll}
s(t_2,\ldots,t_m)&f_{12}&\cdots&f_{1m}\\
t_1q_2(t_2,t_3,\ldots,t_m)+r_2(t_2,\ldots,t_m)&f_{22}&\cdots&f_{2m}\\
t_1q_3(t_3,\ldots,t_m)+r_3(t_2,\ldots,t_m)&f_{32}&\cdots&f_{3m}\\
\ \ \ \ \ \ \ \vdots&\ \ \vdots&\ \ddots&\ \ \vdots\\
t_1q_m(t_m)+r_m(t_2,\ldots,t_m)&f_{m2}&\cdots&f_{mm}\\
\end{array}\right),
\]
where $s,q_i,r_i,f_{ij}\in K[[t_1,\ldots,t_m]]$ are formal power series without constant terms
and satisfy the conditions
\[
s+\sum_{i=2}^mt_iq_i=0,\quad \sum_{i=2}^mt_ir_i=0,\quad \sum_{i=1}^mt_if_{ij}=0,\quad j=2,\ldots,m,
\]
$r_i=r_i(t_2,\ldots,t_m)$, $i=2,\ldots,m$, does not depend on $t_1$, $q_i(t_i,\ldots,t_m)$,
$i=2,\ldots,m$, does not depend on $t_1,\ldots,t_{i-1}$
and  $f_{12}$ does not contain a summand $dt_2$, $d\in K$.
Then $\Theta$ consists of coset representatives of the subgroup $\text{\rm Inn}(\widehat{F_m})$
of the group $\text{\rm IA}(\widehat{F_m})$ and $\text{\rm IOut}(\widehat{F_m})$ is a disjoint union of
the cosets $\text{\rm Inn}(\widehat{F_m})\theta$, $\theta\in \Theta$.
\end{theorem}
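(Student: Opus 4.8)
The plan is to work entirely with Jacobian matrices. By Proposition~\ref{met} the map $J$ identifies $\text{IA}(\widehat{F_m})$ with $I_m+\widehat{S}$ and satisfies $J(\gamma\phi)=J(\gamma)J(\phi)$, while Theorem~\ref{multiplication in Inn} shows that the inner automorphisms form a subgroup, so the cosets $\text{Inn}(\widehat{F_m})\theta$ are well defined. The whole statement then reduces to proving: for every $\phi\in\text{IA}(\widehat{F_m})$ there is \emph{exactly one} inner $\gamma$ with $J(\gamma\phi)$ of the prescribed form. Uniqueness of such a $\gamma$ yields both assertions at once: it gives one representative in each coset (take $\theta=\gamma\phi$, so $\text{Inn}\,\theta=\text{Inn}\,\phi$), and it gives disjointness, since if $\theta,\gamma\theta\in\Theta$ then applying the reduction to $\theta$ itself and using that $\mathrm{id}$ already reduces $\theta$ forces $\gamma=\mathrm{id}$, hence $\gamma\theta=\theta$.

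First I would recast Theorem~\ref{Jacobian of inner autos} compactly. With $\mathbf t=(t_1,\dots,t_m)^{T}$, $\mathbf c=(c_1,\dots,c_m)^{T}$, $\mathbf f=(f_1,\dots,f_m)^{T}$ and $\ell=\sum_r c_rt_r$, one checks $\overline{D}=\ell I_m-\mathbf c\mathbf t^{T}$, $D_0=-\mathbf f\mathbf t^{T}$ and $\ell T=e^{\ell}-1$, so that
\[
J(\exp(\text{ad}\,u))=e^{\ell}I_m-\mathbf p\mathbf t^{T},\qquad \mathbf p=T(\mathbf c+\mathbf f),
\]
where the single relation $\mathbf t^{T}\mathbf f=0$ from Lemma~\ref{metabelian rule} becomes the constraint $\mathbf t^{T}\mathbf p=e^{\ell}-1$; conversely every pair $(\ell,\mathbf p)$ with $\ell$ linear and $\mathbf t^{T}\mathbf p=e^{\ell}-1$ arises from an inner automorphism. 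The payoff is a clean columnwise action: if $J(\phi)=I_m+N$ has columns $C_j=\mathbf e_j+N_j$ with $\mathbf t^{T}N_j=0$, then $\mathbf t^{T}N=0$ kills the cross term and
\[
J(\gamma\phi)=e^{\ell}(I_m+N)-\mathbf p\mathbf t^{T},\qquad\text{i.e.}\quad C_j\longmapsto e^{\ell}C_j-t_j\mathbf p .
\]

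Next I would determine $\gamma$ from the first column, which carries essentially all the normalization. Demanding that entry $(1,1)$ be free of $t_1$ forces $p_1$; demanding each entry $(i,1)$ $(i\ge2)$ be at most linear in $t_1$ forces the $t_1$-divisible part of $p_i$, and demanding its $t_1$-coefficient $q_i$ lie in $K[[t_i,\dots,t_m]]$ forces the $t_2,\dots,t_{i-1}$-part of $p_i$, leaving free only a piece $\pi_i\in K[[t_i,\dots,t_m]]$. I would then show that, after this reduction, the inner constraint $\mathbf t^{T}\mathbf p=e^{\ell}-1$ is \emph{equivalent} to the two relations $s+\sum_{i\ge2}t_iq_i=0$ and $\sum_{i\ge2}t_ir_i=0$: the second falls out of the syzygy property of $C_1$, and the first, read as $\sum_{i\ge2}t_iq_i=-s$, pins down the $\pi_i$ uniquely via the elementary fact that every $t_1$-free series without constant term is uniquely $\sum_{i\ge2}t_i\pi_i$ with $\pi_i\in K[[t_i,\dots,t_m]]$ (assign each monomial to its least present variable). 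The linear form $\ell$ is fixed separately from the degree-one data of $\phi$: vanishing of the constant term of each $q_i$ forces a linear equation for $c_i$ $(i\ge2)$, and absence of a $dt_2$-summand in $f_{12}$ forces $c_1$, with no circularity. Finally the columns $j\ge2$ require nothing beyond being syzygies (automatic) and the already-imposed $f_{12}$-condition, so once $(\ell,\mathbf p)$ is determined $J(\gamma\phi)$ lies in the prescribed set, and every choice was forced.

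I expect the main obstacle to be the bookkeeping that makes $\Theta$ an \emph{exact} transversal, that is, pairing the defining conditions bijectively with the freedoms in $(\ell,\mathbf p)$ so that the reduction is neither over- nor under-determined. The delicate points are: (i) that the inner constraint collapses precisely onto the two syzygy-split relations, with $\sum_{i\ge2}t_ir_i=0$ supplied automatically by the first column of $\phi$; (ii) that the ``no constant term'' requirement on the $q_i$ is exactly what fixes $c_2,\dots,c_m$ -- omitting it would leave a spurious $(m-1)$-parameter family, as the pure translations $\exp(\text{ad}\,c_ky_k)$ (which otherwise satisfy every other condition) already reveal; and (iii) that the nested requirements $q_i\in K[[t_i,\dots,t_m]]$ are calibrated precisely so the unique-decomposition statement applies. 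Checking these compatibilities is where the content lies; the surrounding manipulations are routine.
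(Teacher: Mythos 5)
Your proposal is correct, and its key identities all check out: $\overline{D}=\ell I_m-\mathbf{c}\,\mathbf{t}^{T}$, $D_0=-\mathbf{f}\,\mathbf{t}^{T}$, $\ell T=e^{\ell}-1$, hence $J(\exp(\text{\rm ad}u))=e^{\ell}I_m-\mathbf{p}\,\mathbf{t}^{T}$ with $\mathbf{t}^{T}\mathbf{p}=e^{\ell}-1$ (the converse direction following from Lemma \ref{metabelian rule}); the columnwise action $C_j\mapsto e^{\ell}C_j-t_j\mathbf{p}$; the collapse of the inner constraint onto the two relations via the syzygy $\mathbf{t}^{T}C_1=t_1$ split by powers of $t_1$; and the determination of $\ell$ from the degree-one data with no circularity. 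However, your route is genuinely different from the paper's. The paper first verifies (again via Lemma \ref{metabelian rule}) that every matrix of the prescribed shape is the Jacobian of an IA-automorphism, and then normalizes an arbitrary $\psi$ \emph{iteratively}: it composes with explicitly constructed inner automorphisms $\phi_0=\exp(\text{\rm ad}\overline{u})$ (fixing the linear data, i.e.\ your $\ell$), then $\phi_1=\exp(\text{\rm ad}u_1)$ with $u_1=\sum_{i\geq 2}[y_i,y_1]p_i(\text{\rm ad}y_1,\ldots,\text{\rm ad}y_m)$ killing the part of the first column divisible by $t_1^2$, then $\phi_2,\ldots,\phi_{m-1}$ successively stripping the $t_2,\ldots,t_{i-1}$-dependence of the $q_i$; the disjointness of the cosets is then only asserted (``direct calculations show\ldots''). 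You instead parametrize $\text{\rm Inn}(\widehat{F_m})$ in closed form as rank-one perturbations and solve for the unique reducing pair $(\ell,\mathbf{p})$ in one shot, using the unique decomposition of a $t_1$-free series as $\sum_{i\geq 2}t_i\pi_i$ with $\pi_i\in K[[t_i,\ldots,t_m]]$. What your approach buys: existence and uniqueness come out of the same computation, so the transversality/disjointness claim---precisely the part the paper leaves to the reader---is fully rigorous, and your remark about the translations $\exp(\text{\rm ad}(c_ky_k))$ isolates exactly why the no-constant-term condition on the $q_i$ must appear in the hypotheses. What the paper's approach buys: the reduction is more constructive (each $\phi_k$ is written down explicitly), and it additionally proves that \emph{every} matrix of the prescribed form is realized as $J(\theta)$ for some automorphism $\theta$; that surjectivity onto the matrix set is not needed for the coset statement (so its omission is not a gap in your argument), but it is what upgrades the theorem to a complete parametrization of $\text{\rm IOut}(\widehat{F_m})$ by matrices, which is how the corollary for $L_{m,c}$ is used.
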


\begin{proof}
Let $A=I_m+(f_{ij})\in I_m+\widehat{S}$,
\[
f_{11}=s,\quad f_{i1}=t_1q_i+r_i,\quad i=2,\ldots,m,
\]
be an $m\times m$ matrix satisfying the conditions of the theorem. The equation
\[
s+\sum_{i=2}^mt_iq_i=0
\]
implies that
\[
t_1s+\sum_{i=2}^mt_i(t_1q_i)=0.
\]
Hence By Lemma \ref{metabelian rule} gives that there exists an $f_1$ in the commutator ideal of $\widehat{F_m}$
such that
\[
\frac{\partial f_1}{\partial y_1}=s,\quad \frac{\partial f_1}{\partial y_i}=t_1q_i,\quad i=2,\ldots,m.
\]
Similarly, the conditions
\[
\sum_{i=2}^mt_ir_i=0,\quad \sum_{i=1}^mt_if_{ij}=0,\quad j=2,\ldots,m,
\]
imply that there exist $f_1',f_j$, $j=2,\ldots,m$, in $\widehat{F_m'}$ with
\[
\frac{\partial f_1'}{\partial y_1}=0,\quad \frac{\partial f_1'}{\partial y_i}=r_i,\quad i=2,\ldots,m,
\]
\[
\frac{\partial f_j}{\partial y_i}=f_{ij},\quad i=1,\ldots,j,\quad j=2,\ldots,m.
\]
This means that $A$ is the Jacobian matrix of a certain IA-automorphism of $\widehat{F_m}$.

Now we shall show that for any $\psi\in \text{\rm IA}(\widehat{F_m})$ there exists an inner automorphism
$\phi=\exp(\text{\rm ad}u)\in\text{\rm Inn}(\widehat{F_m})$ and an automorphism $\theta$ in $\Theta$ such that
$\psi=\exp(\text{\rm ad}u)\cdot \theta$.
Let $\psi$  be an arbitrary element of $\text{\rm IA}(\widehat{F_m})$ and let
\[
\psi(y_1)=y_1+\sum_{k>l} [y_k,y_l]f_{kl}(\text{\rm ad}y_l,\ldots,\text{\rm ad}y_m),
\]
\[
\psi(y_2)=y_2+\sum_{k>l} [y_k,y_l]g_{kl}(\text{\rm ad}y_l,\ldots,\text{\rm ad}y_m),
\]
where $f_{kl}=f_{kl}(t_l,\ldots,t_m),g_{kl}=g_{kl}(t_l,\ldots,t_m)\in K[[t_1,\ldots,t_m]]$.
Let us denote the $m\times 2$ matrix consisting of the first two columns of $J(\psi)$ by
$J(\psi)_2$. Then $J(\psi)_2$ is of the form
\[
J(\psi)_2=\left(\begin{array}{cccc}
1-t_2f_{21}-t_3f_{31}-\cdots-t_mf_{m1}&-t_2g_{21}-t_3g_{31}-\cdots-t_mg_{m1}\\
t_1f_{21}-t_3f_{32}-\cdots-t_mf_{m2}&1+t_1g_{21}-t_3g_{32}-\cdots-t_mg_{m2}\\
t_1f_{31}+t_2f_{32}-\cdots-t_mf_{m3}&\ast\\
\vdots&\vdots\\
t_1f_{m1}+\cdots+t_{(m-1)}f_{m(m-1)}&\ast\\
\end{array}\right),
\]
where we have denoted by $\ast$ the corresponding entries of the second column of
the Jacobian matrix of $\psi$. Let
\[
c_1=-g_{21}(0,\ldots,0),\quad
c_k=f_{k1}(0,\ldots,0),\quad k=2,\ldots,m,
\]
and let us define
\[
\phi_0=\exp(\text{\rm ad}\overline{u}),\quad \overline{u}=\sum_{i=1}^mc_iy_i.
\]
Then
\[
J(\phi_0)=I_m+\left(\begin{array}{llll}
\sum_{i\not=1}c_it_i&-c_1t_2&\cdots&-c_1t_m\\
-c_2t_1&\sum_{i\not=2}c_it_i&\cdots&-c_2t_m\\
-c_3t_1&-c_3t_2&\cdots&-c_3t_m\\
\ \ \ \ \ \ \ \vdots&\ \ \vdots&\ \ddots&\ \ \vdots\\
-c_mt_1&-c_mt_2&\cdots&\sum_{i\not=m}c_it_i\\
\end{array}\right)+B_0,
\]
where the entries of the $m\times m$ matrix $B_0$ are elements of $K[[t_1,\ldots,t_m]]$
which do not contain constant and linear terms. Hence
\[
J(\phi_0\psi)_2=\left(\begin{array}{cccc}
1+t_1s_1(t_1,\ldots,t_m)+s_2(t_2,\ldots,t_m)&g(\widehat{t_2})\\
t_1^2p_2(t_1,\ldots,t_m)+t_1q_2(t_2,\ldots,t_m)+r_2(t_2,\ldots,t_m)&\ast\\
t_1^2p_3(t_1,\ldots,t_m)+t_1q_3(t_2,\ldots,t_m)+r_3(t_2,\ldots,t_m)&\ast\\
\vdots&\vdots\\
t_1^2p_m(t_1,\ldots,t_m)+t_1q_m(t_2,\ldots,t_m)+r_m(t_2,\ldots,t_m)&\ast\\
\end{array}\right),
\]
where $t_1s_1(t_1,\ldots,t_m)$ and $s_2(t_2,\ldots,t_m)$
have no linear terms, and
$g(\widehat{t_2})$ does not contain a summand of the form $dt_2$, $d\in K$.
In the first column of $J(\phi_0\psi)_2$ we have collected the components $t_1^2p_i$ divisible by $t_1^2$,
then the components $t_1q_i$ divisible by $t_1$ only (but not by $t_1^2$)
and finally the components $r_i$ which do not depend on $t_1$, $i=2,\ldots,m$.
By Lemma \ref{metabelian rule} we obtain
\[
t_1^2(s_1+t_2p_2+\cdots+t_mp_m)=0,
\]
\[
t_1(s_2+t_2q_2+\cdots+t_mq_m)=0,
\]
\[
t_2r_2+\cdots+t_mr_m=0.
\]
Let us define $T_s=\{t_s,\ldots,t_m\}$ and rewrite $J(\phi_0\psi)_2$ as
\[
J(\phi_0\psi)_2=\left(\begin{array}{llll}
1-t_1t_2p_2-\cdots-t_1t_mp_m-t_2q_2-\cdots-t_mq_m&g(\widehat{t_2})\\
t_1^2p_2+t_1q_2(T_2)+r_2(T_2)&*\\
t_1^2p_3+t_1q_3(T_2)+r_3(T_2)&*\\
\ \ \ \ \ \ \ \ \ \vdots&\vdots\\
t_1^2p_m+t_1q_m(T_2)+r_m(T_2)&*\\
\end{array}\right),
\]
Now we define
\[
\phi_1=\exp(\text{\rm ad}u_1),\quad u_1=\sum_{i=2}^m[y_i,y_1]p_i(\text{\rm ad}y_1,\ldots,\text{\rm ad}y_m).
\]
The Jacobian matrix of $\phi_1$ has the form
\[
J(\phi_1)=I_m+\left(\begin{array}{llll}
t_1\sum_{i\not=1}t_ip_i&t_2\sum_{i\not=1}t_ip_i&\cdots&t_m\sum_{i\not=1}t_ip_i\\
-t_1^2p_2&-t_1t_2p_2&\cdots&-t_1t_mp_2\\
\ \ \ \ \ \ \ \vdots&\ \ \vdots&\ \ddots&\ \ \vdots\\
-t_1^2p_m&-t_1t_2p_m&\cdots&-t_1t_mp_m\\
\end{array}\right).
\]
The element $u_1$ belongs to the commutator ideal of $\widehat{F_m}$ and
the linear operator $\text{\rm ad}u_1$ acts trivially on $\widehat{F_m'}$.
Hence $\exp(\text{\rm ad}u_1)$ is the identity map restricted on $\widehat{F_m'}$.
Since the automorphism $\phi_0\psi$ is IA, we obtain that
\[
\phi_0\psi(y_j)\equiv y_j\quad (\text{\rm mod }\widehat{F_m'}),
\quad \phi_1(\phi_0\psi(y_j))=\phi_0\psi(y_j)+y_j\text{\rm ad}u_1.
\]
Easy calculations give that
\[
J(\phi_1\phi_0\psi)_2=\left(\begin{array}{cccc}
1-t_2p_2-\cdots-t_mp_m&g(\widehat{t_2})\\
t_1q_2(T_2)+r_2(T_2)&\ast\\
t_1q_3(T_2)+r_3(T_2)&\ast\\
\vdots&\vdots\\
t_1q_m(T_2)+r_m(T_2)&\ast\\
\end{array}\right).
\]
Now we write $q_i(T_2)$ in the form
\[
q_i(T_2)=t_2q_i'(T_2)+q_i''(T_3),\quad i=3,\ldots,m,
\]
and define
\[
\phi_2=\exp(\text{\rm ad}u_2),\quad
u_2=\sum_{i=3}^m[y_i,y_2]q_i'(\text{\rm ad}y_2,\ldots,\text{\rm ad}y_m).
\]
Then we obtain that
\[
J(\phi_2\phi_1\phi_0\psi)_2=\left(\begin{array}{cccc}
1-t_2p_2-\cdots-t_mp_m&g(\widehat{t_2})\\
t_1Q_2(T_2)+r_2(T_2)&\ast\\
t_1q_3''(T_3)+r_3(T_2)&\ast\\
\vdots&\vdots\\
t_1q_m''(T_3)+r_m(T_2)&\ast\\
\end{array}\right),
\]
\[
Q_2(T_2)=q_2(T_2)-\sum_{i=3}^mt_iq_i'(T_2).
\]
Repeating this process we construct inner automorphisms $\phi_3,\ldots,\phi_{m-1}$ such that
\[
\theta=\phi_{m-1}\cdots\phi_2\phi_1\phi_0\psi,
\]
\[
J(\phi_{m-1}\cdots\phi_2\phi_1\phi_0\psi)_2=\left(\begin{array}{cccc}
1+s(T_2)&g(\widehat{t_2})\\
t_1Q_2(T_2)+r_2(T_2)&\ast\\
t_1Q_3(T_3)+r_3(T_2)&\ast\\
\vdots&\vdots\\
t_1Q_m(T_m)+r_m(T_2)&\ast\\
\end{array}\right),
\]
\[
s(T_2)=-t_2p_2(T_2)-\cdots-t_mp_m(T_2).
\]
Hence, starting from an arbitrary coset of IA-automorphisms $\text{\rm Inn}(\widehat{F_m})\psi$,
we found that it contains an automorphism $\theta\in\Theta$ with Jacobian matrix
prescribed in the theorem.
Now, let $\theta_1$ and $\theta_2$ be two different automorphisms in $\Theta$ with
$\text{\rm Inn}(\widehat{F_m})\theta_1=\text{\rm Inn}(\widehat{F_m})\theta_2$. Hence,
there exists a nonzero element $u\in \widehat{F_m}$
such that $\theta_1=\exp(\text{\rm ad}u)\theta_2$. Direct calculations show that this
is in contradiction with the form of $J(\theta_1)$.
\end{proof}

\begin{example}
When $m=2$ the results of Theorems \ref{Jacobian of inner autos} and \ref{iouthatF}
have the following simple form. If $u=\overline{u}+u_0$,
\[
\overline{u}=c_1y_1+c_2y_2,\quad u_0=[y_2,y_1]h(\text{\rm ad}y_1,\text{\rm ad}y_2),
\quad h(t_1,t_2)\in K[[t_1,t_2]],
\]
then the Jacobian matrix of the inner automorphism $\exp(\text{ad}u)$ is
\[
J(\exp(\text{ad}u))=I_2+\left(\begin{matrix}
(c_2+t_1h)t_2&(-c_1+t_2h)t_2\\
-(c_2+t_1h)t_1&-(-c_1+t_2h)t_1\\
\end{matrix}\right)\sum_{k\geq 0}\frac{(c_1t_1+c_2t_2)^k}{(k+1)!}.
\]
The Jacobian matrix of the outer automorphism $\theta\in\Theta$ is
\[
J(\theta)=\left(\begin{matrix}
t_2f_1(t_2)&t_2f_2(t_1,t_2)\\
-t_1f_1(t_2)&-t_1f_2(t_1,t_2)\\
\end{matrix}\right),\quad f_2(0,0)=0.
\]
\end{example}

Recall that the augmentation ideal $\omega$ of the polynomial algebra $K[t_1,\ldots,t_m]$ consists of the polynomials
without constant terms and its completion $\widehat{\omega}$ is the ideal of $K[[t_1,\ldots,t_m]]$ of all formal power
series without constant terms. The elements of the commutator ideal of
the free metabelian nilpotent of class $c$ Lie algebra $L_{m,c}$ are of the form
\[
u_0=\sum_{p>q} [y_p,y_q]h_{pq}(\text{\rm ad}y_q,\ldots,\text{\rm ad}y_m),
\]
where $h_{pq}(t_q,\ldots,t_m)$ belongs to the factor algebra $K[t_1,\ldots,t_m]/\omega^{c-1}$
and may be identified with a polynomial of degree $\leq c-2$. The partial derivative $\partial u/\partial x_i$
belongs to $K[t_1,\ldots,t_m]/\omega^c$ and
may be considered as a polynomial of degree $\leq c-1$. Similarly, the Jacobian matrix of an
endomorphism of $F_{m,c}$ is considered modulo the ideal $M_m(\omega^c)$ of $M_m(K[t_1,\ldots,t_m])$.

As a consequence of our Theorems \ref{Jacobian of inner autos} and \ref{iouthatF}
for $\text{\rm Inn}(\widehat{F_m})$ and $\text{\rm IOut}(\widehat{F_m})$ we
immediately obtain the description of the groups of inner and outer automorphisms of
$L_{m,c}$. We shall give the results for the Jacobian matrices only. The multiplication rule for
the group $\text{\rm Inn}(L_{m,c})$ from Theorem \ref{multiplication in Inn} can be stated similarly.

\begin{corollary}
\text{\rm (i)} Let $u=\overline{u}+u_0\in L_{m,c}$, where
\[
\overline{u}=\sum_{r=1}^mc_ry_r, \quad c_r\in K, r=1,\ldots,m,
\]
is the linear component of $u$ and
\[
u_0=\sum_{p>q} [y_p,y_q]h_{pq}(\text{\rm ad}y_q,\ldots,\text{\rm ad}y_m),
\quad h_{pq}(t_1,\ldots,t_m)\in K[t_1,\ldots,t_m]/\omega^{c-1}.
\]
Then the Jacobian matrix of the inner automorphism $\exp(\text{\rm ad}u)$ is
\[
J(\exp(\text{\rm ad}u))=J(\exp(\text{\rm ad}\overline{u}))+D_0T\equiv I_m+(\overline{D}+D_0)T \quad (\text{\rm mod }M_m(\omega^c)),
\]
\[
\overline{D}=\left(\frac{\partial[y_j,\overline{u}]}{\partial y_i}\right),
\quad D_0\equiv\left(\frac{\partial[y_j,u_0]}{\partial y_i}\right) \quad (\text{\rm mod }M_m(\omega^c)),
\]
\[
T=\sum_{k=0}^{c-1}\frac{t^k}{(k+1)!},\quad t=\sum_{r=1}^mc_rt_r.
\]

\text{\rm (ii)}
The automorphisms with the following Jacobian matrices are
coset representatives of the subgroup $\text{\rm Inn}(L_{m,c})$
of the group $\text{\rm IA}(L_{m,c})$:
\[
J(\theta)=I_m+\left(\begin{array}{llll}
s(t_2,\ldots,t_m)&f_{12}&\cdots&f_{1m}\\
t_1q_2(t_2,t_3,\ldots,t_m)+r_2(t_2,\ldots,t_m)&f_{22}&\cdots&f_{2m}\\
t_1q_3(t_3,\ldots,t_m)+r_3(t_2,\ldots,t_m)&f_{32}&\cdots&f_{3m}\\
\ \ \ \ \ \ \ \vdots&\ \ \vdots&\ \ddots&\ \ \vdots\\
t_1q_m(t_m)+r_m(t_2,\ldots,t_m)&f_{m2}&\cdots&f_{mm}\\
\end{array}\right),
\]
where $s,q_i,r_i,f_{ij}\in \omega/\omega^c$, i.e., are polynomials of degree $\leq c-1$ without constant terms.
They satisfy the conditions
\[
s+\sum_{i=2}^mt_iq_i\equiv 0,\quad \sum_{i=2}^mt_ir_i\equiv 0,\quad \sum_{i=1}^mt_if_{ij}\equiv 0
\quad (\text{\rm mod }\omega^{c+1}), \quad j=2,\ldots,m,
\]
$r_i=r_i(t_2,\ldots,t_m)$, $i=1,\ldots,m$, does not depend on $t_1$, $q_i(t_i,\ldots,t_m)$,
$i=2,\ldots,m$, does not depend on $t_1,\ldots,t_{i-1}$
and  $f_{12}$ does not contain a summand $dt_2$, $d\in K$.
\end{corollary}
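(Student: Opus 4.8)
The plan is to deduce both parts directly from Theorems \ref{Jacobian of inner autos} and \ref{iouthatF} by factoring through the canonical projection. The basic identification is $L_{m,c}=\widehat{F_m}/(\widehat{F_m})^{c+1}$, which follows from $F_m=L_m/L_m''$ together with $L_{m,c}=L_m/(L_m''+L_m^{c+1})$; write $\pi\colon\widehat{F_m}\to L_{m,c}$ for the associated continuous surjective homomorphism. The first thing I would record is how $\pi$ interacts with the Jacobian formalism. Since $\varepsilon$ sends a monomial of degree $k$ in the generators to an element of $\widehat{C_m}$ whose coefficients are homogeneous of degree $k-1$, killing the terms of degree $\geq c+1$ upstairs amounts exactly to reading partial derivatives modulo $\omega^c$ and Jacobian matrices modulo $M_m(\omega^c)$. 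Consequently, if $\phi\in\text{\rm IE}(\widehat{F_m})$ induces $\overline\phi$ on $L_{m,c}$, then $J(\overline\phi)\equiv J(\phi)\ (\text{\rm mod }M_m(\omega^c))$. The same degree count fixes the ranges in the statement: a summand $[y_p,y_q]h_{pq}(\text{\rm ad}y_q,\ldots,\text{\rm ad}y_m)$ has degree $2+\deg h_{pq}$, so it survives in $L_{m,c}$ precisely when $\deg h_{pq}\leq c-2$, i.e. $h_{pq}\in K[t_1,\ldots,t_m]/\omega^{c-1}$, and its derivatives lie in $K[t_1,\ldots,t_m]/\omega^{c}$.

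For part (i) I would simply apply $\pi$ to the inner automorphism $\exp(\text{\rm ad}u)$ and to the formula of Theorem \ref{Jacobian of inner autos}. The matrices $\overline D$ and $D_0$ are polynomial and have no constant term, so they descend unchanged to the quotient. The only point is the collapse of the series $T$: because $t=\sum_r c_r t_r\in\omega$ we have $t^k\in\omega^k$, so modulo $\omega^c$ the series truncates to the displayed finite sum, and its top term is in any case annihilated upon multiplication by the constant-term-free matrix $\overline D+D_0$. This yields $J(\exp(\text{\rm ad}u))\equiv I_m+(\overline D+D_0)T\ (\text{\rm mod }M_m(\omega^c))$ with $T=\sum_{k=0}^{c-1}t^k/(k+1)!$, exactly as claimed.

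For part (ii) the plan is to push the coset decomposition of Theorem \ref{iouthatF} through $\pi$, which requires three checks. First, that $\pi$ carries $\text{\rm Inn}(\widehat{F_m})$ onto $\text{\rm Inn}(L_{m,c})$ and $\text{\rm IA}(\widehat{F_m})$ onto $\text{\rm IA}(L_{m,c})$: the former is immediate because each $u\in L_{m,c}$ lifts to $\widehat{F_m}$ and $\pi$ commutes with $\text{\rm ad}$ and with $\exp$, and the latter follows from Proposition \ref{met} once one lifts an admissible truncated Jacobian matrix to an element of $I_m+\widehat S$, a linear problem solvable degree by degree. Second, that the truncation of each $\theta\in\Theta$ has the displayed shape: the defining relations $s+\sum t_i q_i=0$, $\sum t_i r_i=0$, $\sum t_i f_{ij}=0$ become the stated congruences modulo $\omega^{c+1}$ (one degree higher than the entries themselves, via Lemma \ref{metabelian rule}), while the normalizations — independence of $r_i$ on $t_1$, of $q_i$ on $t_1,\ldots,t_{i-1}$, and the absence of a $dt_2$ summand in $f_{12}$ — are visibly preserved by truncation. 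Third, that distinct representatives stay in distinct cosets of $\text{\rm Inn}(L_{m,c})$.

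I expect this last disjointness to be the main obstacle, since passing to a nilpotent quotient can a priori merge cosets that were separated in $\widehat{F_m}$. I would argue it exactly as in the proof of Theorem \ref{iouthatF}: if two truncated representatives satisfied $\theta_1=\exp(\text{\rm ad}u)\,\theta_2$ in $\text{\rm IA}(L_{m,c})$ with $u\neq 0$, I would compare $J(\theta_1)$ with $J(\exp(\text{\rm ad}u))\,J(\theta_2)$ entrywise modulo $M_m(\omega^c)$, using the explicit form of $J(\exp(\text{\rm ad}u))$ from part (i); the normalization constraints built into $\Theta$ leave no room for a nontrivial inner factor, forcing $u=0$ and $\theta_1=\theta_2$. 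Working in finite degree only shortens that computation, so no genuinely new idea beyond the completed case is needed.
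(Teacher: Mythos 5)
Your proposal is correct and takes essentially the same route as the paper, which states this corollary without a written proof as an ``immediate'' consequence of Theorems \ref{Jacobian of inner autos} and \ref{iouthatF}, obtained by factoring $\widehat{F_m}$ modulo $(\widehat{F_m})^{c+1}$ and reading partial derivatives modulo $\omega^c$ and Jacobian matrices modulo $M_m(\omega^c)$ --- exactly your projection $\pi$. Your write-up merely makes explicit what the paper leaves implicit: the truncation of the series $T$ (with the harmless extra top term), the trivial lifting of truncated Jacobian data back to $I_m+\widehat{S}$, and the rerun of the disjointness computation in the truncated setting.
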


\section*{Acknowledgements}

The second named author is grateful to the Institute of Mathematics and Informatics of
the Bulgarian Academy of Sciences for the creative atmosphere and the warm hospitality during his visit when
most of this project was carried out.

\end{document}